\title{Multiplicative parametric geometry of numbers \\ and \\ transference theorems for lattice exponents.
       \thanks{This research was supported in part by RSF grant 18-41-05001}}
\author{Oleg\,N.\,German}
\date{}
\theoremstyle{definition}
\newtheorem{definition}{Definition}
\newtheorem*{notation*}{Notation}
\theoremstyle{remark}
\newtheorem{remark}{Remark}
\newtheorem*{remark*}{Remark}
\theoremstyle{plain}
\newtheorem{theorem}{Theorem}
\newtheorem{proposition}{Proposition}
\newtheorem*{statement*}{Statement}
\newtheorem*{corollary*}{Corollary}
\renewcommand{\phi}{\varphi}
\renewcommand{\vec}[1]{\mathbf{#1}}
\renewcommand{\geq}{\geqslant}
\renewcommand{\leq}{\leqslant}
\newcommand{\R}{\mathbb{R}}
\newcommand{\Z}{\mathbb{Z}}
\newcommand{\La}{\Lambda}
\newcommand{\bpsi}{\underline{\psi}}
\newcommand{\apsi}{\overline{\psi}}
\newcommand{\bPsi}{\underline{\Psi}}
\newcommand{\aPsi}{\overline{\Psi}}
\newcommand{\cB}{\mathcal{B}}
\newcommand{\cL}{\mathcal{L}}
\newcommand{\cT}{\mathcal{T}}
\begin{document}

\maketitle

\begin{abstract}
  In this paper we adapt parametric geometry of numbers developed by Wolfgang Schmidt and Leonard Summerer to a multiplicative setting, and derive a chain of inequalities for the corresponding exponents which splits the transference inequality for Diophantine exponents of lattices in the same way Khintchine's transference inequalities for simultaneous approximation can be split.
\end{abstract}


\section{Introduction}

A wide variety of problems in Diophantine approximation concerns approximating a given subspace of $\R^d$ with rational subspaces of a fixed dimension. Thus the concept of a \emph{Diophantine exponent} naturally arises. Say, given $\Theta=(1,\theta_1,\ldots,\theta_{d-1})\in\R^d$ and $k\in\Z$, $1\leq k\leq d-1$, the quantity
\[\omega_k(\Theta)=\sup\Big\{ \gamma\in\R \,\Big|\, |\Theta\wedge\vec Z|\leq|\vec Z|^{-\gamma}\text{ for infinitely many decomposable }\vec Z\in\textstyle\bigwedge^k(\Z^d) \Big\},\]
where $|\cdot|$ denotes the sup-norm, is called the \emph{$k$-th Diophantine exponent} of $\Theta$.

One can easily check that $\omega_1(\Theta)$ equals the supremum of $\gamma$ such that the inequality
\[\max_{1\leq i\leq d-1}|q\theta_i-p_i|\leq\max_{1\leq i\leq d-1}|p_i|^{-\gamma}\]
admits infinitely many solutions in $(q,p_1,\ldots,p_{d-1})\in\Z^d$. Similarly, $\omega_{d-1}(\Theta)$ equals the supremum of $\gamma$ such that the inequality
\[|q+\theta_1p_1+\ldots+\theta_{d-1}p_{d-1}|\leq\max_{1\leq i\leq d-1}|p_i|^{-\gamma}\]
admits infinitely many solutions in $(q,p_1,\ldots,p_{d-1})\in\Z^d$.

\paragraph{Splitting transference inequalities.}

In 1926 Khintchine \cite{khintchine_palermo} established the famous transference inequalities
\begin{equation} \label{eq:khintchine_transference}
  \begin{aligned}
    \omega_{d-1}(\Theta) & \geq(d-1)\omega_1(\Theta)+d-2, \\
    \omega_1^{-1}(\Theta) & \geq(d-1)\omega_{d-1}^{-1}(\Theta)+d-2.
  \end{aligned}
\end{equation}
In 2007 Laurent \cite{laurent_up_down} following Schmidt \cite{schmidt_annals_1967} split these inequalities into the chains
\begin{equation} \label{eq:laurent_splitting_khintchine}
  \begin{aligned}
    (d-k-1)\omega_{k+1}(\Theta) & \geq(d-k)\omega_k(\Theta)+1, \\
    k\omega_k^{-1}(\Theta) & \geq(k+1)\omega_{k+1}^{-1}(\Theta)+1,
  \end{aligned}
  \qquad
  k=1,\ldots,d-2.
\end{equation}

Similar splitting of Dyson's inequality \cite{dyson} corresponding to the case of approximating a subspace of dimension greater than 1, as well as splitting the inequalities for uniform analogues of $\omega_1$ and $\omega_{d-1}$, can be found in \cite{german_AA_2012}.

\paragraph{Diophantine exponents of lattices.}

Transference inequalities of various kinds connect problems which are dual in some sense. For instance, Khintchine's inequalities relates the problem of approximating a one-dimensional subspace of $\R^d$ with one-dimensional rational subspaces to the problem of approximating that same subspace with $(d-1)$-dimensional rational subspaces.

Recently in \cite{german_2017} a transference theorem for Diophantine exponents of lattices was proved. Let $\cL_d$ denote the space of unimodular lattices in $\R^d$. Let $\La\in\cL_d$. Set
\[\Pi(\vec x)=\prod_{1\leq i\leq d}|x_i|^{1/d}\]
for each $\vec x=(x_1,\ldots,x_d)\in\R^d$. The \emph{Diophantine exponent} of $\La$ is defined as
\[\omega(\La)=\sup\Big\{\gamma\in\R\ \Big|\,\Pi(\vec x)\leq|\vec x|^{-\gamma}\text{ for infinitely many }\vec x\in\La \Big\},\]
where $|\cdot|$ is again the sup-norm.
%
%
Consider the dual lattice
\[ \La^\ast=\Big\{ \vec y\in\R^d \,\Big|\, \langle\vec y,\vec x\rangle\in\Z\text{ for each }\vec x\in\La \Big\}, \]
where $\langle\,\cdot\,,\,\cdot\,\rangle$ is the inner product.

%

\begin{theorem}[see \cite{german_2017}] \label{t:lattice_transference}
  For each $\La\in\cL_d$ we have
  \begin{equation} \label{eq:lattice_transference}
    \omega(\La)\geq\frac{\omega(\La^\ast)}{(d-1)^2+d(d-2)\omega(\La^\ast)}\,.
  \end{equation}
  Here we mean that if $\omega(\La^\ast)=\infty$, then $\omega(\La)\geq\dfrac{1}{d(d-2)}$\,.
\end{theorem}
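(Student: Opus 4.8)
The plan is to prove \eqref{eq:lattice_transference} directly from two standard facts of the geometry of numbers --- Mahler's polar-duality inequality $\lambda_1(K,\La)\,\lambda_d(K^\circ,\La^\ast)\geq1$ for symmetric convex bodies, and the bound $\vol(K)\prod_{i=1}^{d}\lambda_i(K,\La)\leq2^d\covol(\La)$ from Minkowski's second theorem --- applied to suitable coordinate boxes; the resulting estimates can afterwards be recast in the multiplicative parametric language. I would first clear away the degenerate cases. If $\La$ contains a nonzero vector on a coordinate axis, its integer multiples give infinitely many $\vec x\in\La$ with $\Pi(\vec x)=0$, so $\omega(\La)=\infty$ and \eqref{eq:lattice_transference} is trivial; this is exactly the situation in which $\La^\ast$ has infinitely many nonzero points on the coordinate hyperplanes, so in the remaining case all but finitely many vectors of $\La^\ast$ have every coordinate nonzero. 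The case $\omega(\La^\ast)=0$ is trivial, and $\omega(\La^\ast)=\infty$ will follow at the end by a limit. So fix $\gamma$ with $0<\gamma<\omega(\La^\ast)$ and choose (infinitely many) $\vec y\in\La^\ast$ with all $y_i\neq0$ and $\Pi(\vec y)\leq|\vec y|^{-\gamma}$; necessarily $|\vec y|\to\infty$ along this family.

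To each such $\vec y$ I would attach the box $C=\{\vec v\in\R^d:|v_i|\leq|y_i|\ \text{for all }i\}$, whose polar $C^\circ=\bigl\{\vec w:\textstyle\sum_i|y_i|\,|w_i|\leq1\bigr\}$ has volume $2^d/(d!\,\Pi(\vec y)^d)$. Since $\vec y\in C\cap(\La^\ast\setminus\{0\})$ we have $\lambda_1(C,\La^\ast)\leq1$, hence $\lambda_d(C^\circ,\La)\geq1$ by Mahler's inequality, and so, using $\lambda_1\leq\cdots\leq\lambda_d$ and $\covol(\La)=1$,
\[
\lambda_1(C^\circ,\La)^{\,d-1}\leq\prod_{i=1}^{d-1}\lambda_i(C^\circ,\La)\leq\frac{1}{\lambda_d(C^\circ,\La)}\cdot\frac{2^d}{\vol(C^\circ)}\leq d!\,\Pi(\vec y)^d .
\]
Any nonzero $\vec x\in\La\cap\lambda_1(C^\circ,\La)C^\circ$ then satisfies $|y_i|\,|x_i|\leq\lambda_1(C^\circ,\La)\leq c\,\Pi(\vec y)^{d/(d-1)}$ for every $i$, with $c=c(d)=(d!)^{1/(d-1)}$; multiplying these $d$ inequalities and taking a $d$-th root, and separately taking a maximum over $i$, gives
\[
\Pi(\vec x)\leq c\,\Pi(\vec y)^{1/(d-1)},\qquad\qquad |\vec x|\leq\frac{c\,\Pi(\vec y)^{d/(d-1)}}{\min_i|y_i|}\,.
\]

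Next I would feed in the only extra information needed about the shape of $\vec y$. As $|y_i|\leq|\vec y|$ for all $i$ while $\sum_i\log|y_i|^{-1}=d\log\Pi(\vec y)^{-1}$, the largest of the numbers $\log|y_i|^{-1}$, namely $\log(\min_i|y_i|)^{-1}$, is at most $d\log\Pi(\vec y)^{-1}+(d-1)\log|\vec y|$, while $\log|\vec y|\leq\gamma^{-1}\log\Pi(\vec y)^{-1}$ by the choice of $\vec y$. Writing $P=\log\Pi(\vec y)^{-1}\to\infty$, the two displayed estimates become $\log\Pi(\vec x)^{-1}\geq(d-1)^{-1}P-O(1)$ and $\log|\vec x|\leq\bigl(d+\tfrac{d-1}{\gamma}-\tfrac{d}{d-1}\bigr)P+O(1)$, so --- treating $|\vec x|\leq1$ as the (trivial) case --- we get $\Pi(\vec x)\leq|\vec x|^{-\gamma'}$ with
\[
\gamma'=\frac{1}{(d-1)\bigl(d+\tfrac{d-1}{\gamma}-\tfrac{d}{d-1}\bigr)}-o(1)=\frac{\gamma}{d(d-2)\gamma+(d-1)^2}-o(1).
\]
Since $\Pi(\vec x)\leq c\,\Pi(\vec y)^{1/(d-1)}\to0$ along the family, the vectors $\vec x$ so obtained are infinitely many and pairwise distinct; hence $\omega(\La)\geq\gamma/\bigl(d(d-2)\gamma+(d-1)^2\bigr)$ for every $\gamma<\omega(\La^\ast)$, monotonicity in $\gamma$ then yields \eqref{eq:lattice_transference}, and $\gamma\to\infty$ gives $\omega(\La)\geq1/(d(d-2))$ in the case $\omega(\La^\ast)=\infty$.

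The convex-geometry inputs are completely standard and lose nothing beyond the harmless constant $c(d)$, so the step that really demands care is the bookkeeping of the last paragraph: establishing the shape inequality $\log(\min_i|y_i|)^{-1}\leq d\log\Pi(\vec y)^{-1}+(d-1)\log|\vec y|$, and verifying that it is precisely the extremal $\vec y$ --- one very small coordinate, the other $d-1$ equal to $|\vec y|$, with $\Pi(\vec y)$ as large as the constraint $\Pi(\vec y)\leq|\vec y|^{-\gamma}$ permits --- that saturates the denominator $d(d-2)\gamma+(d-1)^2$, so that no slack is thrown away. One should also not forget to check, as above, that $\Pi(\vec x)\to0$ forces infinitely many distinct $\vec x$, rather than merely one per $\vec y$.
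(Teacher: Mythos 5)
Your core construction is correct and, in the main case, loses nothing: for $0<\gamma<\omega(\La^\ast)<\infty$, Mahler's polar inequality plus Minkowski's second theorem applied to the box $C$ adapted to a good dual vector $\vec y$ does give $\Pi(\vec x)\leq c\,\Pi(\vec y)^{1/(d-1)}$ and $|\vec x|\leq c\,\Pi(\vec y)^{d/(d-1)}/\min_i|y_i|$, and your bookkeeping with $P=\log\Pi(\vec y)^{-1}$ produces exactly the exponent $\gamma/\bigl(d(d-2)\gamma+(d-1)^2\bigr)$. This is a direct, ``one box per good dual vector'' argument, close in spirit to the original proof in \cite{german_2017}; the present paper does not reprove Theorem \ref{t:lattice_transference} this way, but instead derives its reformulation (Theorem \ref{t:lattice_transference_schmimmerered}) from the uniform-in-$\pmb\tau$ chain of Theorem \ref{t:lattice_transference_schmimmerered_split_up}, i.e.\ from Propositions \ref{prop:properties_of_S_k}, \ref{prop:L_and_S_for_dual_lattices}, \ref{prop:inequalities_for_schmexponents} and \ref{prop:omega_vs_Psi}. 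The classical inputs (Mahler's polar duality, Minkowski's second theorem) are the same; what the parametric route buys is that the single inequality splits into the intermediate ones, while your route buys a short self-contained proof of the end inequality only.

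There is, however, a genuine gap in your treatment of the degenerate case. Your reduction rests on the claim that $\La$ has a nonzero vector on a coordinate axis \emph{exactly} when $\La^\ast$ has infinitely many nonzero points on the coordinate hyperplanes; only the forward implication is true (a single nonzero $\vec y_0\in\La^\ast$ with one vanishing coordinate gives infinitely many such points without forcing any axis vector in $\La$). When $\omega(\La^\ast)<\infty$ this is harmless, since then no nonzero $\vec y\in\La^\ast$ can have a zero coordinate (it would give $\Pi(\vec y)=0$ and hence $\omega(\La^\ast)=\infty$), so your witnesses automatically have all coordinates nonzero. But the case $\omega(\La^\ast)=\infty$ is explicitly part of the statement, and it may occur \emph{solely} because $\La^\ast$ contains such a $\vec y_0$, while the dual vectors with all coordinates nonzero realize only a small exponent; then your closing limit $\gamma\to\infty$ has nothing to act on, and simply letting the box grow in the degenerate directions does not recover $\omega(\La)\geq 1/(d(d-2))$. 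You need a separate argument here, e.g.\ intersecting $\La$ with the $\La$-rational hyperplane $\vec y_0^{\,\perp}$ and running a lower-dimensional box argument there (for $d=3$ this does yield the required bound $1/3$). A second, minor point: your trivial case should be ``some nonzero $\vec x\in\La$ has a zero coordinate'' rather than just an axis vector; with that, $\Pi(\vec x)>0$ for all produced points, and $\Pi(\vec x)\to0$ gives infinitely many \emph{distinct} solutions --- they need not be pairwise distinct, which is what your text asserts, but infinitely many distinct values is all that is required.
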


\begin{remark} \label{r:lattice_transference}
  Notice that \eqref{eq:lattice_transference} can be reformulated as
  \[1+\omega(\La)^{-1}\leq
    (d-1)^2(1+\omega(\La^\ast)^{-1}).\]
\end{remark}

One of the main incentives to commence this research was the wish to split \eqref{eq:lattice_transference} the way Khintchine's inequalities can be split. To this end we adapt the Schmidt--Summerer parametric geometry of numbers to the lattice setting and call it \emph{multiplicative parametric geometry of numbers}. It appears that this approach provides many natural ways to define intermediate exponents. We observe certain phenomena of local nature and use them to obtain the desired splitting. We would like to notice that though we used multiplicative parametric geometry of numbers as a mere tool, we deem it to be of interest in itself.

The structure of the paper is as follows. In Section \ref{sec:parametric} we prove basic statements of multiplicative parametric geometry of numbers, in Section \ref{sec:lattice_exponents_schimmerered} we interpret lattice exponents in terms of the constructed theory, and in Section \ref{sec:splitting} we split the reformulated Theorem \ref{t:lattice_transference}. Finally, in Section \ref{sec:3dim} we consider the three-dimensional case.

\section{Multiplicative parametric geometry of numbers} \label{sec:parametric}

\subsection{Successive minima} 

In the spirit of fundamental works \cite{schmidt_summerer_2009}, \cite{schmidt_summerer_2013}, \cite{roy_annals_2015} it is natural to propose the following approach. Most of the argument proposed in this Section is a translation to the current context of the argument of Schmidt and Summerer \cite{schmidt_summerer_2009}, and also of the paper \cite{german_AA_2012}. We remind that $|\cdot|$ denotes the sup-norm.

Let $\La\in\cL_d$. Set
\[\cB=\Big\{ \vec x\in\R^d\, \Big|\,|\vec x|\leq1 \Big\}.\]
For each $\pmb\tau=(\tau_1,\ldots,\tau_d)\in\R^d$, $\tau_1+\ldots+\tau_d=0$, set
\[D_{\pmb\tau}=\textup{diag}(e^{\tau_1},\ldots,e^{\tau_d})\]
and
\[\cB_{\pmb\tau}=D_{\pmb\tau}\cB.\]

Let $\lambda_k(\cB_{\pmb\tau})=\lambda_k(\cB_{\pmb\tau},\La)$, $k=1,\ldots,d$, denote the $k$-th successive minimum, i.e.
the infimum of positive $\lambda$ such that $\lambda\cB_{\pmb\tau}$ contains at least $k$ linearly independent vectors of $\La$.
Finally, for each $k=1,\ldots,d$, let us set
\[L_k(\pmb\tau)=L_k(\pmb\tau,\La)=\log\big(\lambda_k(\cB_{\pmb\tau},\La)\big),\qquad
  S_k(\pmb\tau)=S_k(\pmb\tau,\La)=\sum_{1\leq j\leq k}L_j(\pmb\tau,\La).\]


\subsection{Properties of $L_k$ and $S_k$} 

Since we want all the $\cB_{\pmb\tau}$ to be of the same volume, we assume the sum of the components of $\pmb\tau$ to be zero. Let us set
\[\cT=\Big\{ \pmb\tau=(\tau_1,\ldots,\tau_d)\in\R^d\, \Big|\,\tau_1+\ldots+\tau_d=0 \Big\}.\]
For each $\pmb\tau\in\cT$ set
\[|\pmb\tau|_+=\max_{1\leq k\leq d}\tau_k,
  \qquad
  |\pmb\tau|_-=|-\pmb\tau|_+=-\min_{1\leq k\leq d}\tau_k.\]
Clearly,
\[|\pmb\tau|=\max(|\pmb\tau|_-,|\pmb\tau|_+),\]
\begin{equation}\label{eq:tau_minus_vs_tau_plus}
  |\pmb\tau|_+/(d-1)\leq|\pmb\tau|_-\leq(d-1)|\pmb\tau|_+\ .
\end{equation}

\begin{proposition} \label{prop:properties_of_L_k}
  The functions $L_k(\pmb\tau)$ enjoy the following properties:

  \vskip1.5mm
  \textup{(i)} $L_1(\pmb\tau)\leq\ldots\leq L_d(\pmb\tau)$;

  \vskip1.5mm
  \textup{(ii)} $0\leq-L_1(\pmb\tau)\leq|\pmb\tau|_++O(1)$;

  \vskip1.5mm
  \textup{(iii)} $L_d(\pmb\tau)\leq|\pmb\tau|_-+O(1)$;

  \vskip1.5mm
  \textup{(iv)} each $L_k(\pmb\tau)$ is continuous and piecewise linear.
\end{proposition}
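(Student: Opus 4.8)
The plan is to verify the four properties by unwinding the definition of the successive minima of the parallelepiped $\cB_{\pmb\tau}=D_{\pmb\tau}\cB$ with respect to $\La$, reducing everything to elementary estimates on the $\ell^\infty$-ball.

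First, (i) is immediate: $\lambda\cB_{\pmb\tau}$ grows with $\lambda$, so the infimum of $\lambda$ giving $k$ independent lattice points is nondecreasing in $k$; taking logarithms gives $L_1\le\dots\le L_d$. For the sign in (ii), note that $\La$ is unimodular, hence $\vol(\cB_{\pmb\tau})=\vol(\cB)=2^d$ is bounded, so Minkowski's first theorem forces $\lambda_1(\cB_{\pmb\tau})$ to be bounded below by a positive constant depending only on $d$; thus $-L_1(\pmb\tau)$ is bounded below, and after absorbing the constant we may write $0\le-L_1(\pmb\tau)+O(1)$, or equivalently $-L_1(\pmb\tau)\le|\pmb\tau|_++O(1)$ once we have the upper bound. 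That upper bound, and (iii), follow by comparing $\cB_{\pmb\tau}$ with fixed balls: since the diagonal entries of $D_{\pmb\tau}$ are $e^{\tau_i}$, one has the inclusions $e^{-|\pmb\tau|_-}\cB\subseteq\cB_{\pmb\tau}\subseteq e^{|\pmb\tau|_+}\cB$, whence $\lambda_k(e^{|\pmb\tau|_+}\cB)\le\lambda_k(\cB_{\pmb\tau})\le\lambda_k(e^{-|\pmb\tau|_-}\cB)$, i.e.
\[
L_k(\pmb\tau)\le-|\pmb\tau|_++\log\lambda_k(\cB,\La)\quad\text{and}\quad
L_k(\pmb\tau)\ge|\pmb\tau|_-\cdot(-1)+\ldots
\]
— more carefully, $\cB_{\pmb\tau}\subseteq e^{|\pmb\tau|_+}\cB$ gives $\lambda_1(\cB_{\pmb\tau})\ge e^{-|\pmb\tau|_+}\lambda_1(\cB,\La)$, so $-L_1(\pmb\tau)\le|\pmb\tau|_+-\log\lambda_1(\cB,\La)=|\pmb\tau|_++O(1)$, which is (ii); and $e^{-|\pmb\tau|_-}\cB\subseteq\cB_{\pmb\tau}$ gives $\lambda_d(\cB_{\pmb\tau})\le e^{|\pmb\tau|_-}\lambda_d(\cB,\La)$, so $L_d(\pmb\tau)\le|\pmb\tau|_-+\log\lambda_d(\cB,\La)=|\pmb\tau|_-+O(1)$, which is (iii). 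Here $\lambda_k(\cB,\La)$ depends only on $\La$, so these constants are legitimately $O(1)$.

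For (iv), continuity is a standard consequence of the fact that $\pmb\tau\mapsto D_{\pmb\tau}$ is continuous and successive minima depend continuously on the body (in the Hausdorff metric) for a fixed lattice; concretely, perturbing $\pmb\tau$ by $\pmb\delta$ scales $\cB_{\pmb\tau}$ between $e^{-|\pmb\delta|}$ and $e^{|\pmb\delta|}$ times itself, so $|L_k(\pmb\tau+\pmb\delta)-L_k(\pmb\tau)|\le|\pmb\delta|$, giving $1$-Lipschitz continuity on $\cT$ (with respect to the sup-norm). Piecewise linearity is the substantive point: for a fixed $k$-tuple of linearly independent lattice vectors $\vec x^{(1)},\dots,\vec x^{(k)}$, the smallest $\lambda$ with $\lambda\cB_{\pmb\tau}\supseteq\{\vec x^{(1)},\dots,\vec x^{(k)}\}$ equals $\max_{1\le j\le k}\max_{1\le i\le d}|x^{(j)}_i|e^{-\tau_i}$, whose logarithm is $\max_{j,i}\bigl(\log|x^{(j)}_i|-\tau_i\bigr)$, a maximum of finitely many affine functions of $\pmb\tau$ — hence convex and piecewise linear. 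Then $\lambda_k(\cB_{\pmb\tau})$ is the minimum over all such admissible $k$-tuples of these convex piecewise-linear functions, and $L_k$ is the pointwise minimum of this family. The obstacle is that a pointwise minimum of infinitely many piecewise-linear functions need not be piecewise linear in general; one must argue that, locally near any $\pmb\tau_0$, only finitely many lattice vectors can ever participate in realizing $\lambda_1,\dots,\lambda_k$ (their sup-norms in the metric $|\cdot|_{\pmb\tau}$ are bounded on a neighbourhood by the bounds in (ii)–(iii), so they lie in a fixed compact set and hence form a finite set), reducing the local minimum to one over finitely many functions — which is piecewise linear. This finiteness/local-compactness step is the main thing to get right; the rest is bookkeeping.
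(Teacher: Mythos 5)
Your parts (i), (iii) and the right-hand inequality of (ii) follow the paper's route exactly: sandwich $\cB_{\pmb\tau}$ between $e^{-|\pmb\tau|_-}\cB$ and $e^{|\pmb\tau|_+}\cB$ and compare successive minima of the dilated bodies. The one genuine flaw is your Minkowski step for the left-hand inequality of (ii). Minkowski's convex body theorem applied to the symmetric convex body $\cB_{\pmb\tau}$, whose volume is $2^d=2^d\covol(\La)$, gives an \emph{upper} bound $\lambda_1(\cB_{\pmb\tau})\leq1$, hence $L_1(\pmb\tau)\leq0$ exactly, which is what (ii) asserts. Your claim that Minkowski's theorem ``forces $\lambda_1(\cB_{\pmb\tau})$ to be bounded below by a positive constant'' is false, and no such lower bound exists: for $\La=\Z^d$ one has $\lambda_1(\cB_{\pmb\tau})\leq e^{-\tau_1}\to0$ as $\tau_1\to+\infty$ along $\cT$ (indeed, the whole theory of $\psi_1$ is about how fast $\lambda_1(\cB_{\pmb\tau})$ can decay). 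Moreover the conclusion you draw from it, $0\leq-L_1(\pmb\tau)+O(1)$, is strictly weaker than the stated $0\leq-L_1(\pmb\tau)$. The repair is immediate (reverse the direction and quote Minkowski as above, exactly as the paper does), but as written this step fails.

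For (iv) you are, if anything, more careful than the paper. The paper writes $L_k$ as a minimum of the piecewise linear functions $L_{\vec v}$ over a certain subset of $\La$ and simply asserts piecewise linearity, while you correctly flag that a pointwise minimum of infinitely many piecewise linear functions need not be piecewise linear and close the gap by a local finiteness argument: on a compact neighbourhood of any $\pmb\tau_0$, every optimal $k$-tuple consists of vectors $\vec v$ with $\lambda_{\vec v}(\cB_{\pmb\tau})\leq\lambda_d(\cB_{\pmb\tau})\leq e^{|\pmb\tau|_-}\lambda_d(\cB)$ bounded, hence with $|\vec v|$ bounded, hence drawn from a finite set; so locally $L_k$ is a minimum of finitely many maxima of affine functions of $\pmb\tau$. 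Combined with your Lipschitz estimate $|L_k(\pmb\tau+\pmb\delta)-L_k(\pmb\tau)|\leq|\pmb\delta|$ for continuity, this gives a complete and slightly tighter proof of (iv) than the one in the paper; the remaining parts coincide with the paper's argument once the Minkowski step is corrected.
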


\begin{proof}
Statement \textup{(i)} follows immediately from the definition of successive minima. The inequality $L_1(\pmb\tau)\leq0$ is a corollary of Minkowski's convex body theorem. The rest of \textup{(ii)} and \textup{(iii)} is provided by
\[e^{-|\pmb\tau|_+}\cB_{\pmb\tau}\subset\cB\implies
  \lambda_1(e^{-|\pmb\tau|_+}\cB_{\pmb\tau})\geq\lambda_1(\cB)\implies
  \lambda_1(\cB_{\pmb\tau})\geq e^{-|\pmb\tau|_+}\lambda_1(\cB)\]
and
\[e^{|\pmb\tau|_-}\cB_{\pmb\tau}\supset\cB\implies
  \lambda_d(e^{|\pmb\tau|_-}\cB_{\pmb\tau})\leq\lambda_d(\cB)\implies
  \lambda_d(\cB_{\pmb\tau})\leq e^{|\pmb\tau|_-}\lambda_d(\cB).\]

Let us prove \textup{(iv)}. For each nonzero $\vec v\in\La$ let us denote by $\lambda_{\vec v}(\cB_{\pmb\tau})$ the infimum of positive $\lambda$ such that $\lambda\cB_{\pmb\tau}$ contains $\vec v$, and set
\[L_{\vec v}(\pmb\tau)=\log\big(\lambda_{\vec v}(\cB_{\pmb\tau})\big).\]
If $\vec v=(v_1,\ldots,v_d)$, then
\[\lambda_{\vec v}(\cB_{\pmb\tau})=\max_{1\leq i\leq d}(|v_i|e^{-\tau_i}),\]
and
\[L_{\vec v}(\pmb\tau)=\max_{\substack{1\leq i\leq d \\ v_i\neq0}}\big(\log|v_i|-\tau_i\big),\]
i.e. $L_{\vec v}(\pmb\tau)$ is continuous and piecewise linear. Notice that for each $\pmb\tau$ and each $k=1,\ldots,d$ there is a $\vec v=\vec v(\pmb\tau,k)\in\La$ such that $\lambda_k(\cB_{\pmb\tau})=\lambda_{\vec v}(\cB_{\pmb\tau})$. Hence, denoting
\begin{equation} \label{eq:La_k}
  \La_k=\Big\{ \vec v\in\La\, \Big|\, \exists\,\pmb\tau:\,\lambda_k(\cB_{\pmb\tau})\leq\lambda_{\vec v}(\cB_{\pmb\tau}) \Big\},
\end{equation}
we get
\[L_k(\pmb\tau)=\min_{\vec v\in\La_k}L_{\vec v}(\pmb\tau).\]
Thus, $L_k(\pmb\tau)$ is indeed continuous and piecewise linear.
\end{proof}


\begin{proposition} \label{prop:properties_of_S_k}
  The functions $S_k(\pmb\tau)$ enjoy the following properties:

  \vskip1.5mm
  \textup{(i)} $-\log d!\leq S_d(\pmb\tau)\leq0$;

  \vskip1.5mm
  \textup{(ii)} $\dfrac{k+1}{k}S_k(\pmb\tau)\leq S_{k+1}(\pmb\tau)\leq\dfrac{d-k-1}{d-k}S_k(\pmb\tau)$;

  \vskip1.5mm
  \textup{(iii)} $(d-1)S_1(\pmb\tau)\leq S_{d-1}(\pmb\tau)\leq S_1(\pmb\tau)/(d-1)$;

  \vskip3mm
  \textup{(iv)} $S_{d-1}(\pmb\tau)=-L_d(\pmb\tau)+O(1)$.
\end{proposition}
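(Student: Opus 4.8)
The plan is to derive each of the four assertions about $S_k$ directly from the corresponding facts about $L_k$ established in Proposition~\ref{prop:properties_of_L_k}, together with Minkowski's second theorem on successive minima. For part~\textup{(i)}, I would invoke Minkowski's second theorem: since $\vol(\cB)=2^d$ and $\vol(\cB_{\pmb\tau})=\vol(\cB)$ because $\det D_{\pmb\tau}=1$, we get $\tfrac{2^d}{d!}\leq\lambda_1(\cB_{\pmb\tau})\cdots\lambda_d(\cB_{\pmb\tau})\vol(\cB_{\pmb\tau})/2^d\leq 1$, which upon taking logarithms becomes precisely $-\log d!\leq S_d(\pmb\tau)\leq 0$. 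Here I am using the normalization that $\La$ is unimodular; this is where the $\pmb\tau\in\cT$ assumption (sum of components zero, so $\vol$ is preserved) does its work.

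For parts~\textup{(ii)} and \textup{(iii)}, the key observation is the monotonicity $L_1\leq\cdots\leq L_d$ from Proposition~\ref{prop:properties_of_L_k}\textup{(i)}. From monotonicity one has $L_{k+1}\geq\tfrac1k(L_1+\cdots+L_k)=\tfrac1k S_k$, hence $S_{k+1}=S_k+L_{k+1}\geq S_k+\tfrac1k S_k=\tfrac{k+1}{k}S_k$, giving the left inequality of~\textup{(ii)}. For the right inequality, monotonicity gives $L_{k+1}\leq\tfrac{1}{d-k}(L_{k+1}+\cdots+L_d)=\tfrac{1}{d-k}(S_d-S_k)$, so $S_{k+1}=S_k+L_{k+1}\leq S_k+\tfrac{1}{d-k}(S_d-S_k)$; using $S_d\leq0$ from part~\textup{(i)} we get $S_{k+1}\leq S_k\bigl(1-\tfrac{1}{d-k}\bigr)=\tfrac{d-k-1}{d-k}S_k$. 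Then~\textup{(iii)} follows by iterating~\textup{(ii)}: applying the left inequality from $k=1$ up to $k=d-2$ gives $S_{d-1}\geq(d-1)S_1$ when $S_1\leq0$ (note $S_1=L_1\leq0$ by Minkowski), and applying the right inequality similarly telescopes the product $\prod_{k=1}^{d-2}\tfrac{d-k-1}{d-k}=\tfrac{1}{d-1}$ to yield $S_{d-1}\leq S_1/(d-1)$. One must be a little careful with signs, since $S_k$ need not be nonpositive for $k<d$; but $S_1=L_1\leq0$ always, and the chain of ratio inequalities in~\textup{(ii)} only involves multiplying by positive constants, so the iteration is clean as long as one starts from $S_1\leq0$.

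For part~\textup{(iv)}, I would write $S_{d-1}(\pmb\tau)=S_d(\pmb\tau)-L_d(\pmb\tau)$ and then simply note $S_d(\pmb\tau)=O(1)$ by part~\textup{(i)} (indeed $-\log d!\leq S_d\leq0$ is a bound uniform in $\pmb\tau$), so $S_{d-1}(\pmb\tau)=-L_d(\pmb\tau)+O(1)$. This is essentially immediate once part~\textup{(i)} is in hand.

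The main obstacle is part~\textup{(i)}, or rather making sure the application of Minkowski's second theorem is correctly normalized: one needs $\vol(\cB_{\pmb\tau})$ to be independent of $\pmb\tau$ (guaranteed by $\pmb\tau\in\cT$, since then $\det D_{\pmb\tau}=e^{\tau_1+\cdots+\tau_d}=1$) and one needs the covolume of $\La$ to be $1$ (the unimodularity hypothesis). Everything else is elementary manipulation of inequalities. A secondary subtlety worth a sentence in the write-up is the sign bookkeeping in the iteration for~\textup{(iii)}: since the constants $\tfrac{k+1}{k}$ and $\tfrac{d-k-1}{d-k}$ are positive, the inequalities in~\textup{(ii)} compose correctly, and the telescoping products collapse to $d-1$ and $1/(d-1)$ respectively, but one should explicitly record that $S_1=L_1\leq0$ so that multiplying by these positive constants preserves the direction.
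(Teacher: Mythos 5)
Your proof takes essentially the same route as the paper: Minkowski's second theorem for \textup{(i)}, the monotonicity $L_1\leq\cdots\leq L_d$ combined with $S_d\leq0$ to trap $L_{k+1}$ between $\tfrac1k S_k$ and $\tfrac{-1}{d-k}S_k$ for \textup{(ii)}, iteration of \textup{(ii)} for \textup{(iii)}, and boundedness of $S_d$ for \textup{(iv)}. The only blemishes are cosmetic: your displayed form of Minkowski's second theorem has a misplaced factor of $2^d$ in the lower bound (it should read $\tfrac{1}{d!}\leq\lambda_1\cdots\lambda_d\vol(\cB_{\pmb\tau})/2^d\leq1$), and the caveat about $S_1\leq0$ in \textup{(iii)} is unnecessary, since chaining inequalities of the form $S_{k+1}\geq c_kS_k$ and $S_{k+1}\leq c_k'S_k$ with positive constants is valid regardless of signs, as you yourself note at the end.
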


\begin{proof}
  Statement \textup{(i)} follows from Minkowski's second theorem, which states that
  \[\frac1{d!}\leq\prod_{1\leq k\leq d}\lambda_k(\cB_{\pmb\tau})\leq1.\]

  Furthermore, statement \textup{(i)} of Proposition \ref{prop:properties_of_L_k} and statement \textup{(i)} of the current Proposition imply
  \[S_k(\pmb\tau)\leq kL_{k+1}(\pmb\tau)\]
  and
  \[S_k(\pmb\tau)+(d-k)L_{k+1}(\pmb\tau)\leq S_d(\pmb\tau)\leq0.\]
  Hence
  \[\frac1{k}S_k(\pmb\tau)\leq L_{k+1}(\pmb\tau)\leq\dfrac{-1}{d-k}S_k(\pmb\tau),\]
  and \textup{(ii)} follows.

  Applying statement \textup{(ii)} consequently, we get statement \textup{(iii)}.

  As for statement \textup{(iv)}, it is an immediate corollary of statement \textup{(i)}.
\end{proof}

We remind that $\La^\ast$ denotes the dual lattice.

\begin{proposition} \label{prop:L_and_S_for_dual_lattices}
  For each $\pmb\tau$ we have

  \vskip1.5mm
  \textup{(i)} 
  \hskip2.5mm
  $-\log d\leq L_k(\pmb\tau,\La)+L_{d+1-k}(-\pmb\tau,\La^\ast)\leq\log d!,\qquad\quad\ k=1,\ldots,d$,

  \vskip1.5mm
  \textup{(ii)} 
  $-k\log d\leq S_k(\pmb\tau,\La)-S_{d-k}(-\pmb\tau,\La^\ast)\leq(k+1)\log d!,\quad k=1,\ldots,d-1$.
\end{proposition}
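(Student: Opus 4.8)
The plan is to deduce (ii) from (i) by summing, and to prove (i) via a standard duality estimate for successive minima of a convex body and its polar. First I would recall that the polar (dual) body of $\cB$ with respect to the standard inner product is again $\cB$ (the sup-norm ball is self-dual up to the $\ell^1$-ball, so more precisely $\cB^\ast$ is the cross-polytope, and one has the inclusions $d^{-1}\cB\subseteq\cB^\ast\subseteq\cB$). The crucial point is that $(\cB_{\pmb\tau})^\ast=D_{\pmb\tau}^{-1}\cB^\ast=D_{-\pmb\tau}\cB^\ast$, since $D_{\pmb\tau}$ is symmetric; hence $D_{-\pmb\tau}(d^{-1}\cB)\subseteq(\cB_{\pmb\tau})^\ast\subseteq D_{-\pmb\tau}\cB$, i.e. $d^{-1}\cB_{-\pmb\tau}\subseteq(\cB_{\pmb\tau})^\ast\subseteq\cB_{-\pmb\tau}$.

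Next I would invoke Mahler's theorem on polar bodies: for a symmetric convex body $K$ and a lattice $\La$ one has
\[
1\leq\lambda_k(K,\La)\,\lambda_{d+1-k}(K^\ast,\La^\ast)\leq d!\,,\qquad k=1,\ldots,d.
\]
Applying this with $K=\cB_{\pmb\tau}$, $\La=\La$, and then using the inclusions above to replace $\lambda_{d+1-k}((\cB_{\pmb\tau})^\ast,\La^\ast)$ by $\lambda_{d+1-k}(\cB_{-\pmb\tau},\La^\ast)$ up to a factor $d$ (monotonicity of successive minima under inclusion of bodies), I get
\[
d^{-1}\leq\lambda_k(\cB_{\pmb\tau},\La)\,\lambda_{d+1-k}(\cB_{-\pmb\tau},\La^\ast)\leq d\cdot d!\,.
\]
Taking logarithms gives exactly statement (i), with the constants as stated (and in fact one can be slightly more generous on the upper side; $\log d!$ suffices after absorbing the factor $d$ into the estimate, or one simply writes $\log(d\cdot d!)$ — I would state it in the cleanest form consistent with the later use).

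For statement (ii), I would simply sum the inequalities in (i) over the appropriate range. Writing $S_k(\pmb\tau,\La)=\sum_{j=1}^k L_j(\pmb\tau,\La)$ and noting that as $j$ runs over $1,\ldots,k$ the index $d+1-j$ runs over $d,d-1,\ldots,d+1-k$, summation of (i) yields
\[
S_k(\pmb\tau,\La)+\sum_{j=d+1-k}^{d}L_j(-\pmb\tau,\La^\ast)
=S_k(\pmb\tau,\La)+\big(S_d(-\pmb\tau,\La^\ast)-S_{d-k}(-\pmb\tau,\La^\ast)\big),
\]
and since $-\log d!\leq S_d(-\pmb\tau,\La^\ast)\leq 0$ by Proposition \ref{prop:properties_of_S_k}(i), rearranging gives $S_k(\pmb\tau,\La)-S_{d-k}(-\pmb\tau,\La^\ast)$ bounded below by $-k\log d$ and above by $(k+1)\log d!$ (the extra $\log d!$ on each side coming from the $S_d$ term). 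I do not expect any genuine obstacle here: the only thing to be careful about is bookkeeping the constants through the polarity inclusions and the $S_d$ correction term, and making sure the self-duality inclusions for the sup-norm ball are invoked with the correct factor $d$ (which is where the asymmetry $-k\log d$ versus $(k+1)\log d!$ in the final bound originates).
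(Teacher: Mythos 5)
Your proposal is correct and follows essentially the same route as the paper: Mahler's inequality $1\leq\lambda_k(K,\La)\,\lambda_{d+1-k}(K^\circ,\La^\ast)\leq d!$ applied to $K=\cB_{\pmb\tau}$, the inclusions $d^{-1}\cB_{-\pmb\tau}\subseteq\cB_{\pmb\tau}^\circ\subseteq\cB_{-\pmb\tau}$, and then summation of (i) together with $-\log d!\leq S_d(-\pmb\tau,\La^\ast)\leq0$ to get (ii). The only slip is bookkeeping: since $\cB_{\pmb\tau}^\circ\subseteq\cB_{-\pmb\tau}$ the upper bound $d!$ transfers unchanged and the factor $d$ is lost only on the lower side, so the constants $-\log d$ and $\log d!$ in (i) come out exactly, with no need for $d\cdot d!$.
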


\begin{proof}
  In his paper \cite{mahler_casopis_convex} Mahler proved that for a parallelepiped $\cB_{\pmb\tau}$ and its polar cross-polytope $\cB_{\pmb\tau}^\circ$ we have
  \begin{equation} \label{eq:mahler_for_cube_and_octahedron}
    1\leq\lambda_k(\cB_{\pmb\tau},\La)\lambda_{d+1-k}(\cB_{\pmb\tau}^\circ,\La^\ast)\leq d!.
  \end{equation}
  Since
  \[\cB_{\pmb\tau}^\circ=
    (D_{\pmb\tau}\cB)^\circ=
    D_{\pmb\tau}^{-1}\cB^\circ=
    D_{-\pmb\tau}\cB^\circ,\]
  we have
  \[d^{-1}\cB_{-\pmb\tau}\subset\cB_{\pmb\tau}^\circ\subset\cB_{-\pmb\tau}.\]
  Therefore, \eqref{eq:mahler_for_cube_and_octahedron} implies
  \[\frac1d\leq\lambda_k(\cB_{\pmb\tau},\La)\lambda_{d+1-k}(\cB_{-\pmb\tau},\La^\ast)\leq d!.\]
  Taking the $\log$ of all sides, we get \textup{(i)}.

  Furthermore, statement \textup{(i)} implies
  \begin{equation} \label{eq:L_for_dual_lattices_summed_up}
    -k\log d\leq S_k(\pmb\tau,\La)+\big(S_d(-\pmb\tau,\La^\ast)-S_{d-k}(-\pmb\tau,\La^\ast)\big)\leq k\log d!,
  \end{equation}
  whereas by statement \textup{(i)} of Proposition \ref{prop:properties_of_S_k}
  \begin{equation} \label{eq:S_d_for_dual_lattice}
    0\leq-S_d(-\pmb\tau,\La^\ast)\leq\log d!.
  \end{equation}
  Summing up \eqref{eq:L_for_dual_lattices_summed_up} and \eqref{eq:S_d_for_dual_lattice}, we get \textup{(ii)}.
\end{proof}

\subsection{Local essence of the transference phenomenon} 

There is a large variety of transference theorems for all kinds of Diophantine exponents. All of them depict the connection between problems that are in some sense dual. We claim that Propositions \ref{prop:properties_of_S_k} and \ref{prop:L_and_S_for_dual_lattices} provide a relation of local nature which implies most of the existing transference theorems.

\begin{theorem} \label{t:essence_of_transference}
  We have
  \begin{equation} \label{eq:essence_of_transference_leq}
    S_1(\pmb\tau,\La)\leq\frac{S_1(-\pmb\tau,\La^\ast)}{d-1}+O(1).     
  \end{equation}
\end{theorem}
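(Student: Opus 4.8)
The plan is to express $S_1(\pmb\tau,\La)$ and $S_1(-\pmb\tau,\La^\ast)$ through the quantities controlled by the previous propositions, namely $S_{d-1}$ and $L_d$, and then chain the resulting inequalities. The key observation is that statement (iv) of Proposition \ref{prop:properties_of_S_k} gives $S_{d-1}(\pmb\tau,\La) = -L_d(\pmb\tau,\La) + O(1)$, which lets us pass between a ``first minimum'' quantity and a ``last minimum'' quantity on the \emph{same} lattice, while Proposition \ref{prop:L_and_S_for_dual_lattices} lets us pass between a lattice and its dual. So the bridge from $\La$ to $\La^\ast$ will be built from two slopes.

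The steps I would carry out, in order. First, apply statement (iii) of Proposition \ref{prop:properties_of_S_k} to $\La$: this gives $(d-1)S_1(\pmb\tau,\La) \leq S_{d-1}(\pmb\tau,\La)$, i.e. $S_1(\pmb\tau,\La) \leq \frac{1}{d-1}S_{d-1}(\pmb\tau,\La)$. Second, use Proposition \ref{prop:properties_of_S_k}(iv) to rewrite $S_{d-1}(\pmb\tau,\La) = -L_d(\pmb\tau,\La) + O(1)$. Third, apply Proposition \ref{prop:L_and_S_for_dual_lattices}(i) with $k=d$ (so that $d+1-k=1$): this relates $L_d(\pmb\tau,\La)$ to $L_1(-\pmb\tau,\La^\ast)$, yielding $L_d(\pmb\tau,\La) \geq -L_1(-\pmb\tau,\La^\ast) - \log d$, hence $-L_d(\pmb\tau,\La) \leq L_1(-\pmb\tau,\La^\ast) + O(1)$. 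Fourth, observe $L_1(-\pmb\tau,\La^\ast) = S_1(-\pmb\tau,\La^\ast)$ by definition of $S_1$. Concatenating the four steps:
\begin{equation*}
  S_1(\pmb\tau,\La) \leq \frac{1}{d-1}\bigl(-L_d(\pmb\tau,\La)\bigr) + O(1) \leq \frac{1}{d-1}\,L_1(-\pmb\tau,\La^\ast) + O(1) = \frac{S_1(-\pmb\tau,\La^\ast)}{d-1} + O(1),
\end{equation*}
which is precisely \eqref{eq:essence_of_transference_leq}.

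Alternatively one could bypass Proposition \ref{prop:properties_of_S_k}(iv) and work directly: Proposition \ref{prop:L_and_S_for_dual_lattices}(ii) with $k=d-1$ relates $S_{d-1}(\pmb\tau,\La)$ to $S_1(-\pmb\tau,\La^\ast)$ up to $O(1)$, so combining with step one above already closes the argument. Both routes are essentially bookkeeping once the propositions are in hand, so I do not anticipate a genuine obstacle here; the only point requiring a little care is tracking which of the two dual objects carries the sign flip $\pmb\tau \mapsto -\pmb\tau$ and confirming the index arithmetic $d+1-k$ resp. $d-k$ lines up correctly, so that the ``first minimum of the dual'' really appears on the right-hand side rather than the last minimum. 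The $O(1)$ terms depend only on $d$ (through $\log d$ and $\log d!$), which is exactly what the statement claims, so no uniformity issue arises.
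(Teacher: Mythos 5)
Your proof is correct and follows essentially the paper's route: the paper derives \eqref{eq:essence_of_transference_leq} from the chain $S_1(\pmb\tau,\La)\leq S_{d-1}(\pmb\tau,\La)/(d-1)$ together with the duality $S_{d-1}(\pmb\tau,\La)=S_1(-\pmb\tau,\La^\ast)+O(1)$ (statements (ii) and (i) of Theorem \ref{t:essence_of_transference_split_up}), which is exactly your ``alternative'' route, and your primary route merely unwinds that duality into its ingredients (Minkowski's second theorem via Proposition \ref{prop:properties_of_S_k}(iv) and Mahler's transference via Proposition \ref{prop:L_and_S_for_dual_lattices}(i)). The index bookkeeping and the uniformity of the $O(1)$ terms are handled correctly.
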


Theorem \ref{t:essence_of_transference} is a corollary to the following statement, which in addition splits \eqref{eq:essence_of_transference_leq} into a chain of inequalities between the values of consecutive $S_k$.

\begin{theorem} \label{t:essence_of_transference_split_up}
  We have

  \textup{(i)}
  $S_k(\pmb\tau,\La)=S_{d-k}(-\pmb\tau,\La^\ast)+O(1),\ \ k=1,\ldots,d-1,$ \vphantom{$\frac{\big|}{|}$}

  \textup{(ii)}
  $\displaystyle
    S_1(\pmb\tau,\La)\leq\ldots\leq
    \frac{S_k(\pmb\tau,\La)}{k}\leq\ldots\leq
    \frac{S_{d-1}(\pmb\tau,\La)}{d-1}.$

  \textup{(iii)}
  $\displaystyle
    \frac{S_1(\pmb\tau,\La)}{d-1}\geq\ldots\geq
    \frac{S_k(\pmb\tau,\La)}{d-k}\geq\ldots\geq
    S_{d-1}(\pmb\tau,\La).$
%
\end{theorem}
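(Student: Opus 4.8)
The plan is to derive all three parts directly from the two propositions already proved, since the substantive input --- Minkowski's second theorem and Mahler's inequality relating a parallelepiped to its polar cross-polytope --- has already been absorbed into Propositions \ref{prop:properties_of_S_k} and \ref{prop:L_and_S_for_dual_lattices}. Concretely, part \textup{(i)} is nothing but a restatement of part \textup{(ii)} of Proposition \ref{prop:L_and_S_for_dual_lattices}: the bounds $-k\log d\leq S_k(\pmb\tau,\La)-S_{d-k}(-\pmb\tau,\La^\ast)\leq(k+1)\log d!$ have both sides bounded by constants depending only on $d$, so $S_k(\pmb\tau,\La)=S_{d-k}(-\pmb\tau,\La^\ast)+O(1)$ for $k=1,\ldots,d-1$. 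No further work is needed here.

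For parts \textup{(ii)} and \textup{(iii)} I would start from part \textup{(ii)} of Proposition \ref{prop:properties_of_S_k}, that is $\tfrac{k+1}{k}S_k(\pmb\tau)\leq S_{k+1}(\pmb\tau)\leq\tfrac{d-k-1}{d-k}S_k(\pmb\tau)$, and simply rescale. Dividing the left inequality by $k+1$ gives $\tfrac1k S_k(\pmb\tau)\leq\tfrac1{k+1}S_{k+1}(\pmb\tau)$ for $k=1,\ldots,d-2$; concatenating these, with the first link written as $S_1=S_1/1$, produces the chain in \textup{(ii)}. Dividing the right inequality by $d-k-1$ --- which is positive since $k\leq d-2$, so the sense of the inequality is preserved --- gives $\tfrac1{d-k-1}S_{k+1}(\pmb\tau)\leq\tfrac1{d-k}S_k(\pmb\tau)$, i.e. $\tfrac1{d-k}S_k(\pmb\tau)\geq\tfrac1{d-(k+1)}S_{k+1}(\pmb\tau)$; concatenating these, with the last link being $S_{d-1}/(d-(d-1))=S_{d-1}$, produces the chain in \textup{(iii)}.

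Finally, to explain how Theorem \ref{t:essence_of_transference} drops out, I would combine the two extreme terms of the chain in \textup{(ii)}, namely $S_1(\pmb\tau,\La)\leq\tfrac1{d-1}S_{d-1}(\pmb\tau,\La)$, with part \textup{(i)} specialized to $k=d-1$, which reads $S_{d-1}(\pmb\tau,\La)=S_1(-\pmb\tau,\La^\ast)+O(1)$; substituting and dividing the $O(1)$ by $d-1$ yields exactly $S_1(\pmb\tau,\La)\leq\tfrac1{d-1}S_1(-\pmb\tau,\La^\ast)+O(1)$.

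I do not expect a genuine obstacle at this stage: the real difficulty has been paid for in Propositions \ref{prop:properties_of_S_k} and \ref{prop:L_and_S_for_dual_lattices}, and the only thing to watch is the index bookkeeping and the fact that every divisor appearing ($k$, $k+1$, $d-k$, $d-k-1$) is strictly positive in the relevant range, so no inequality is ever reversed. It is perhaps worth remarking in passing that the hypotheses of Proposition \ref{prop:properties_of_S_k}\,\textup{(ii)} already force $S_k(\pmb\tau)\leq0$ for $k=1,\ldots,d-1$ (otherwise its lower and upper bounds on $S_{k+1}$ would contradict each other), which makes the monotonicity of the normalized quantities $S_k/k$ and $S_k/(d-k)$ intuitively transparent --- though, as the divisions above show, this sign information is not actually required for the proof.
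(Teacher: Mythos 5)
Your proposal is correct and follows exactly the paper's route: part (i) is read off from statement (ii) of Proposition \ref{prop:L_and_S_for_dual_lattices}, and parts (ii) and (iii) come from dividing the two inequalities in statement (ii) of Proposition \ref{prop:properties_of_S_k} by the positive quantities $k+1$ and $d-k-1$ respectively; your index bookkeeping and the concluding derivation of Theorem \ref{t:essence_of_transference} are all accurate. The paper's own proof is just a terser statement of the same two reductions, so there is nothing to add.
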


\begin{proof}
  Statement \textup{(i)} follows from statement \textup{(ii)} of Proposition \ref{prop:L_and_S_for_dual_lattices}. Statements \textup{(ii)} and \textup{(iii)} follow from statement \textup{(ii)} of Proposition \ref{prop:properties_of_S_k}.
\end{proof}

It is clear that \eqref{eq:essence_of_transference_leq} follows immediately from statements \textup{(i)} and \textup{(ii)} of Theorem \ref{t:essence_of_transference_split_up}. Notice that the inequality
\begin{equation} \label{eq:essence_of_transference_geq}
  S_1(\pmb\tau,\La)\geq(d-1)S_1(-\pmb\tau,\La^\ast)+O(1)
\end{equation}
provided in the same way by statements \textup{(i)} and \textup{(iii)} of Theorem \ref{t:essence_of_transference_split_up} is actually the same as \eqref{eq:essence_of_transference_leq}, since for each $\La\in\cL$ we have $(\La^\ast)^\ast=\La$.

We also notice that, when formulated separately, it might be more natural to write $L_1$ instead of $S_1$ in \eqref{eq:essence_of_transference_leq} and \eqref{eq:essence_of_transference_geq}. However, in connection with Theorem \ref{t:essence_of_transference_split_up}, we prefer to write $S_1$.

\subsection{Schmidt--Summerer exponents} 

Every norm in $\R^d$ induces a norm in $\cT$. Particularly, the sup-norm $|\cdot|$. As for the functionals induced by $|\cdot|_+$ and $|\cdot|_-$, they are not norms, the corresponding ``unit balls'' are simplices and are not $\vec 0$-symmetric. However, $|\cdot|_+$ cannot be neglected, as it is the image of the sup-norm under the logarithmic mapping: if $\vec x=(x_1,\ldots,x_d)$, $x_i>0$, $i=1,\ldots,d$, and $\vec x_{\log}=(\log x_1,\ldots,\log x_d)$, then
\[\log|\vec x|=|\vec x_{\log}|_+\,.\]
Most of our argument works for an arbitrary functional one can choose to measure $\pmb\tau$, it only needs to generate an exhaustion of $\cT$.

Let $f$ be an arbitrary non-negative function on $\cT$ such that the sets
\[\Big\{ \pmb\tau\in\cT\, \Big|\,f(\pmb\tau)\leq\lambda \Big\}\]
form a monotone exhaustion of $\cT$.
Set
\[f^\ast(\pmb\tau)=f(-\pmb\tau).\]
For each $k=1,\ldots,d$ consider the functions
\[
  \psi_k(\pmb\tau,\La,f)=
  \frac{L_k(\pmb\tau,\La)}{f(\pmb\tau)}\,,
  \qquad
  \Psi_k(\pmb\tau,\La,f) =
  \sum_{1\leq j\leq k}\psi_j(\pmb\tau,\La,f).
\]

\begin{definition}
  Given $\La$ and $f$, we define the \emph{Schmidt--Summerer lower and upper exponents of the first type} as
  \[\bpsi_k(\La,f)=\liminf_{|\pmb\tau|\to\infty}\psi_k(\pmb\tau,\La,f),
    \qquad
    \apsi_k(\La,f)=\limsup_{|\pmb\tau|\to\infty}\psi_k(\pmb\tau,\La,f),\]
  and \emph{of the second type} as
  \[\bPsi_k(\La,f)=\liminf_{|\pmb\tau|\to\infty}\Psi_k(\pmb\tau,\La,f),
    \qquad
    \aPsi_k(\La,f)=\limsup_{|\pmb\tau|\to\infty}\Psi_k(\pmb\tau,\La,f).\]
\end{definition}

As before, in the absence of ambiguity, we will omit sometimes the indication of dependence on $\La$ and $f$.

Clearly, $|\pmb\tau|\to\infty$ if and only if $f(\pmb\tau)\to\infty$. Hence, Propositions \ref{prop:properties_of_L_k}, \ref{prop:properties_of_S_k}, \ref{prop:L_and_S_for_dual_lattices}, after division by $f(\pmb\tau)$ and taking the $\liminf$'s and $\limsup$'s, provide the following variety of relations.

\begin{proposition} \label{prop:inequalities_for_schmexponents}
  Given $\La$ and $f$, we have
  \vskip-8mm
  \[
  \begin{aligned}
    & \textup{(i)}    & \bpsi_k\leq\bpsi_{k+1}\,, & \qquad \vphantom{\bigg|}
                        \apsi_k\leq\apsi_{k+1}\,; \\
    & \textup{(ii)}   & \bPsi_1=\bpsi_1\leq0\,, & \qquad
                        \aPsi_1=\apsi_1\leq0\,; \\
    & \textup{(iii)}  & \bPsi_{d-1}=-\apsi_d\,, & \qquad \vphantom{\bigg|}
                        \aPsi_{d-1}=-\bpsi_d\,; \\
    & \textup{(iv)}   & \bPsi_d=0\,, &  \qquad
                        \aPsi_d=0\,;  \\
    & \textup{(v)}    & \dfrac{k+1}{k}\bPsi_k\leq\bPsi_{k+1}\leq\dfrac{d-k-1}{d-k}\bPsi_k\leq0\,, & \qquad \vphantom{\bigg|}
                        \dfrac{k+1}{k}\aPsi_k\leq\aPsi_{k+1}\leq\dfrac{d-k-1}{d-k}\aPsi_k\leq0\,; \\
    & \textup{(vi)}   & (d-1)\bPsi_1\leq\bPsi_{d-1}\leq\bPsi_1/(d-1)\,, & \qquad
                        (d-1)\aPsi_1\leq\aPsi_{d-1}\leq\aPsi_1/(d-1)\,; \\
    & \textup{(vii)}  & \bpsi_k(\La,f)=-\apsi_{d+1-k}(\La^\ast,f^\ast)\,, & \qquad \vphantom{\bigg|}
                        \apsi_k(\La,f)=-\bpsi_{d+1-k}(\La^\ast,f^\ast)\,; \\
    & \textup{(viii)} & \bPsi_k(\La,f)=\bPsi_{d-k}(\La^\ast,f^\ast)\,, & \qquad
                        \aPsi_k(\La,f)=\aPsi_{d-k}(\La^\ast,f^\ast)\,.
  \end{aligned}
  \]
\end{proposition}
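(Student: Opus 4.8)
The plan is to derive every one of the relations (i)--(viii) by dividing the \emph{pointwise} inequalities already established in Propositions \ref{prop:properties_of_L_k}, \ref{prop:properties_of_S_k} and \ref{prop:L_and_S_for_dual_lattices} by $f(\pmb\tau)$ and then passing to $\liminf$ and $\limsup$ as $|\pmb\tau|\to\infty$. Two elementary facts underpin the whole argument: $f(\pmb\tau)>0$, so division preserves both inequalities and the operations $\liminf$, $\limsup$ (and $\liminf(cg)=c\liminf g$ for a constant $c>0$); and $f(\pmb\tau)\to\infty$ as $|\pmb\tau|\to\infty$ (because the sublevel sets of $f$ exhaust $\cT$), so that any $O(1)$ summand, once divided by $f(\pmb\tau)$, tends to $0$ and is therefore irrelevant to $\liminf$ and $\limsup$.

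First I would handle the items that come straight from pointwise inequalities. Dividing $L_k(\pmb\tau)\le L_{k+1}(\pmb\tau)$ (Proposition \ref{prop:properties_of_L_k}(i)) by $f(\pmb\tau)$ gives $\psi_k\le\psi_{k+1}$, hence (i). Since $\Psi_1=\psi_1$ literally, and $\psi_1=L_1/f\le0$ pointwise by Proposition \ref{prop:properties_of_L_k}(ii), we get $\bPsi_1=\bpsi_1\le0$ and $\aPsi_1=\apsi_1\le0$, i.e.\ (ii); note that here one must use the identity $\Psi_1=\psi_1$ rather than any additivity of $\liminf$, which fails for sums. Dividing Proposition \ref{prop:properties_of_S_k}(ii) by $f(\pmb\tau)$ yields $\frac{k+1}{k}\Psi_k\le\Psi_{k+1}\le\frac{d-k-1}{d-k}\Psi_k$ pointwise, and taking $\liminf$ (resp.\ $\limsup$) gives the corresponding chain in (v); the terminal ``$\le0$'' then follows by induction on $k$ from $\bPsi_1\le0$ (resp.\ $\aPsi_1\le0$) using the right-hand inequality just obtained, since $\frac{d-k-1}{d-k}\ge0$. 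Iterating (v) --- equivalently, dividing Proposition \ref{prop:properties_of_S_k}(iii) --- gives (vi).

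The remaining two items of (i)--(vi) use the $O(1)$-cancellations. Proposition \ref{prop:properties_of_S_k}(i) gives $-\log d!\le S_d(\pmb\tau)\le0$, so $\Psi_d(\pmb\tau)=S_d(\pmb\tau)/f(\pmb\tau)\to0$, whence $\bPsi_d=\aPsi_d=0$, which is (iv). Proposition \ref{prop:properties_of_S_k}(iv) gives $\Psi_{d-1}(\pmb\tau)=-\psi_d(\pmb\tau)+O(1)/f(\pmb\tau)=-\psi_d(\pmb\tau)+o(1)$; since an $o(1)$ summand does not affect $\liminf$ or $\limsup$, and $\liminf(-\psi_d)=-\limsup\psi_d$, we obtain $\bPsi_{d-1}=-\apsi_d$ and $\aPsi_{d-1}=-\bpsi_d$, which is (iii).

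Finally, the duality relations (vii) and (viii). The key bookkeeping observation is that $\psi_{d+1-k}(-\pmb\tau,\La^\ast,f^\ast)=L_{d+1-k}(-\pmb\tau,\La^\ast)/f^\ast(-\pmb\tau)=L_{d+1-k}(-\pmb\tau,\La^\ast)/f(\pmb\tau)$ and likewise $\Psi_{d-k}(-\pmb\tau,\La^\ast,f^\ast)=S_{d-k}(-\pmb\tau,\La^\ast)/f(\pmb\tau)$ --- both have the \emph{same} denominator $f(\pmb\tau)$. Dividing Proposition \ref{prop:L_and_S_for_dual_lattices}(i) by $f(\pmb\tau)$ then gives $\psi_k(\pmb\tau,\La,f)=-\psi_{d+1-k}(-\pmb\tau,\La^\ast,f^\ast)+o(1)$, and dividing Proposition \ref{prop:L_and_S_for_dual_lattices}(ii) gives $\Psi_k(\pmb\tau,\La,f)=\Psi_{d-k}(-\pmb\tau,\La^\ast,f^\ast)+o(1)$. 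Taking $\liminf$ and $\limsup$ along $|\pmb\tau|\to\infty$, discarding the $o(1)$, using $\liminf(-g)=-\limsup g$, and reparametrising by $\pmb\sigma=-\pmb\tau$ (which again runs over $\cT$ with $|\pmb\sigma|\to\infty$, as $\cT$ is closed under negation), we arrive at (vii) and (viii). I expect no genuine difficulty here; the only points requiring a little attention are exactly the three flagged above: the vanishing of $O(1)/f(\pmb\tau)$, the non-additivity of $\liminf/\limsup$ (so that (ii) rests on the identity $\Psi_1=\psi_1$), and the substitution $\pmb\tau\mapsto-\pmb\tau$ together with $f^\ast(-\pmb\tau)=f(\pmb\tau)$ in the duality step.
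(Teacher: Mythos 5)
Your proof is correct and takes essentially the same route as the paper, which disposes of the proposition in one sentence: divide the pointwise relations of Propositions \ref{prop:properties_of_L_k}, \ref{prop:properties_of_S_k}, \ref{prop:L_and_S_for_dual_lattices} by $f(\pmb\tau)$ and take $\liminf$'s and $\limsup$'s, using that $f(\pmb\tau)\to\infty$ as $|\pmb\tau|\to\infty$. The points you spell out --- the vanishing of $O(1)/f(\pmb\tau)$, the identity $\Psi_1=\psi_1$ in place of any additivity of $\liminf$, and the substitution $\pmb\tau\mapsto-\pmb\tau$ together with $f^\ast(-\pmb\tau)=f(\pmb\tau)$ in (vii)--(viii) --- are exactly the details the paper leaves implicit.
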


\section{Lattice exponents in terms of Schmidt--Summerer exponents}\label{sec:lattice_exponents_schimmerered}

Schmidt--Summerer exponents of lattices are, in a sense, global characteristics, whereas we could consider a one-parametric path $\big\{\pmb\tau(s)\,\big|\,s\in\R_+\big\}$ and the corresponding $\liminf$'s and $\limsup$'s as $s\to\infty$. This is performed in \cite{german_AA_2012} for the path defined by
\[\tau_1(s)=\ldots=\tau_m(s)=s,\quad\tau_{m+1}(s)=\ldots=\tau_d(s)=-ms/n\]
corresponding to the problem of simultaneous approximation of zero with the values of $n$ linear forms in $m$ variables, $n+m=d$. In that case Schmidt--Summerer exponents correspond to \emph{intermediate Diophantine exponents} (see \cite{german_AA_2012}). In the current setting we have a similar situation: the exponents $\bPsi_1(\La,f)=\bpsi_1(\La,f)$ for $f(\pmb\tau)=|\pmb\tau|_+$ and $\omega(\La)$ are but two different points of view at the same phenomenon.


\begin{proposition} \label{prop:omega_vs_Psi}
  Let $f(\pmb\tau)=|\pmb\tau|_+$. Then
  \[\omega(\La)^{-1}+\bPsi_1(\La,f)^{-1}+1=0.\]
\end{proposition}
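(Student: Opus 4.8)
The plan is to prove the equivalent identity $\omega(\La)=\dfrac{-\bPsi_1(\La,f)}{1+\bPsi_1(\La,f)}$ by passing back and forth between nonzero lattice points $\vec v\in\La$ and directions $\pmb\tau\in\cT$. The bridge is the elementary fact that for a nonzero $\vec v=(v_1,\ldots,v_d)\in\La$ all of whose coordinates are nonzero and for every $\pmb\tau\in\cT$ one has
\[
  \log\Pi(\vec v)=\frac1d\sum_{i=1}^{d}\big(\log|v_i|-\tau_i\big)\leq\max_{i}\big(\log|v_i|-\tau_i\big)=L_{\vec v}(\pmb\tau),
  \qquad
  \log|\vec v|\leq L_{\vec v}(\pmb\tau)+f(\pmb\tau),
\]
the first inequality because $\tau_1+\ldots+\tau_d=0$ makes the average equal $\log\Pi(\vec v)$, the second by picking the index at which $|v_i|=|\vec v|$ and using $\tau_i\leq f(\pmb\tau)$; and that \emph{both} inequalities become equalities for the special direction $\pmb\tau(\vec v)\in\cT$ whose coordinates are $\tau_i=\log|v_i|-\log\Pi(\vec v)$. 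Hence $\psi_1(\pmb\tau(\vec v))\leq L_{\vec v}(\pmb\tau(\vec v))/f(\pmb\tau(\vec v))=\log\Pi(\vec v)\big/\big(\log|\vec v|-\log\Pi(\vec v)\big)$, whereas for an arbitrary $\pmb\tau$, choosing a minimizing vector $\vec v(\pmb\tau)$ with $L_{\vec v(\pmb\tau)}(\pmb\tau)=L_1(\pmb\tau)$, the same two inequalities bound the multiplicative quality $-\log\Pi(\vec v(\pmb\tau))\big/\log|\vec v(\pmb\tau)|$ from below in terms of $\psi_1(\pmb\tau)=L_1(\pmb\tau)/f(\pmb\tau)$. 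Writing $\phi(t)=-t/(1+t)$ — a decreasing involution on $(-1,\infty)$ with $\phi(\infty)=-1$ — these two computations read $\psi_1(\pmb\tau(\vec v))\leq\phi\big(-\log\Pi(\vec v)/\log|\vec v|\big)$ and $-\log\Pi(\vec v(\pmb\tau))/\log|\vec v(\pmb\tau)|\geq\phi(\psi_1(\pmb\tau))$.

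First I would record that $\omega(\La)$ equals the $\limsup$ of the qualities $-\log\Pi(\vec v)/\log|\vec v|$ taken over lattice points of unbounded norm, that $\bPsi_1(\La,f)=\bpsi_1(\La,f)$ (Proposition~\ref{prop:inequalities_for_schmexponents}) is the $\liminf$ of $\psi_1(\pmb\tau)$ as $|\pmb\tau|\to\infty$, and that $|\pmb\tau|\to\infty$ is equivalent to $f(\pmb\tau)\to\infty$ by \eqref{eq:tau_minus_vs_tau_plus}. The bound $\bpsi_1\leq\phi(\omega)$ — equivalently, since $\phi$ is a decreasing involution, $\omega\leq\phi(\bpsi_1)$ — is then obtained by choosing lattice points $\vec v^{(j)}$ with $|\vec v^{(j)}|\to\infty$ whose qualities tend to $\omega(\La)$, applying the first bound to the directions $\pmb\tau(\vec v^{(j)})$, noting that $f(\pmb\tau(\vec v^{(j)}))=\log|\vec v^{(j)}|-\log\Pi(\vec v^{(j)})\to\infty$ so that these directions do tend to infinity, and letting $j\to\infty$ using continuity of $\phi$. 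The reverse bound $\omega\geq\phi(\bpsi_1)$ is obtained dually: choose $\pmb\tau^{(j)}$ with $|\pmb\tau^{(j)}|\to\infty$ and $\psi_1(\pmb\tau^{(j)})\to\bpsi_1$, take minimizers $\vec v^{(j)}=\vec v(\pmb\tau^{(j)})$, apply the second bound, and let $j\to\infty$. Combining the two gives $\omega(\La)=\phi(\bPsi_1(\La,f))$, which rearranges to $\omega(\La)^{-1}+\bPsi_1(\La,f)^{-1}+1=0$.

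The step I expect to be the main obstacle is the reverse direction, where one must convert a statement about the $\liminf$ of first minima into the existence of \emph{infinitely many} lattice points of good multiplicative quality: the minimizers $\vec v^{(j)}$ must be unbounded (so as to furnish infinitely many distinct solutions with $|\vec v^{(j)}|\to\infty$) and must have nonzero coordinates (so that $\pmb\tau(\vec v)$ and the quality are even defined). Both issues turn on whether $\La$ contains a vector with a vanishing coordinate. If it does, then the integer multiples of such a vector all satisfy $\Pi(\cdot)=0\leq|\cdot|^{-\gamma}$ for every $\gamma$, so $\omega(\La)=\infty$, while the family of directions $\pmb\tau$ collapsing one coordinate ($\tau_{i_0}=-(d-1)t$, the other $\tau_i=t$) gives $\psi_1\to-1$ as $t\to\infty$, so $\bPsi_1(\La,f)=-1$; both sides of the identity then agree ($\phi(-1)=\infty$, $\omega^{-1}=0$). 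If not, then every nonzero $\vec v\in\La$ has all coordinates nonzero, and a fixed such $\vec v$ satisfies $L_{\vec v}(\pmb\tau)\geq\min_i\log|v_i|+|\pmb\tau|_-\geq\min_i\log|v_i|+f(\pmb\tau)/(d-1)$ by \eqref{eq:tau_minus_vs_tau_plus}, hence $\psi_{\vec v}(\pmb\tau)\to$ a limit $\geq 1/(d-1)>0$ as $|\pmb\tau|\to\infty$; since $\psi_1\leq0$, no fixed vector can stay a minimizer, so the $\vec v^{(j)}$ escape every bounded set and, after passing to a subsequence, have $|\vec v^{(j)}|\to\infty$, as needed. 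Everything else is routine bookkeeping with $\liminf$/$\limsup$ and with the Möbius map $\phi$.
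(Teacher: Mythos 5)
Your proof is correct and takes essentially the same route as the paper: the paper's argument also passes through the special directions $\pmb\tau(\vec v)$ with $\tau_i=\log\big(|v_i|/\Pi(\vec v)\big)$ and identifies $\bPsi_1(\La,f)=\bpsi_1(\La,f)$ with the $\liminf$ of $\log\Pi(\vec v)\big/\big(\log|\vec v|-\log\Pi(\vec v)\big)$ over lattice points of growing norm. Your two-sided argument (and the care taken with minimizers escaping to infinity and with lattice points having a vanishing coordinate) simply makes explicit the middle equality in the paper's chain, which is stated there without detail.
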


\begin{proof}
  As in the proof of Proposition \ref{prop:properties_of_L_k}, let us set for each nonzero $\vec v\in\La$ and each $\pmb\tau\in\cT$
  \[
  \begin{array}{l}
    \lambda_{\vec v}(\cB_{\pmb\tau})=\inf\Big\{ \lambda>0\, \Big|\,\vec v\in\lambda\cB_{\pmb\tau} \Big\}, \\
    L_{\vec v}(\pmb\tau)=\log\big(\lambda_{\vec v}(\cB_{\pmb\tau})\big). \vphantom{^{\big|}}
  \end{array}
  \]
  For each $\vec v=(v_1,\ldots,v_d)\in\La$ let us set
  \[\pmb\tau(\vec v)=\Big(\log\big(|v_1|\big/\Pi(\vec v)\big),\ldots,\log\big(|v_d|\big/\Pi(\vec v)\big)\Big).\]
  Then
  \[
  \begin{array}{l}
    \lambda_{\vec v}(\cB_{\pmb\tau(\vec v)})=\Pi(\vec v), \\ \vphantom{\bigg|}
    L_{\vec v}(\pmb\tau(\vec v))=\log(\Pi(\vec v)), \\
    |\pmb\tau(\vec v)|_+=\log|\vec v|-\log(\Pi(\vec v)).
  \end{array}
  \]
  Hence
  \begin{multline*}
    \bPsi_1(\La,f)=
    \bpsi_1(\La,f)=
    \liminf_{|\pmb\tau|\to\infty}\frac{L_1(\pmb\tau)}{|\pmb\tau|_+}=
    \liminf_{|\pmb\tau|\to\infty}\frac{\min_{\vec v\in\La}L_{\vec v}(\pmb\tau)}{|\pmb\tau|_+}=\vphantom{\Bigg|} \\ =
    \liminf_{\substack{\vec v\in\La \\ |\vec v|\to\infty}}\frac{L_{\vec v}(\pmb\tau(\vec v))}{|\pmb\tau(\vec v)|_+}=
    \liminf_{\substack{\vec v\in\La \\ |\vec v|\to\infty}}\frac{\log(\Pi(\vec v))}{\log|\vec v|-\log(\Pi(\vec v))}=\quad \\ =
    -\Bigg(1+\Bigg(\limsup_{\substack{\vec v\in\La \\ |\vec v|\to\infty}}\frac{\log\big(\Pi(\vec v)^{-1}\big)}{\log|\vec v|}\Bigg)^{-1}\Bigg)^{-1}=
    -\Big(1+\omega(\La)^{-1}\Big)^{-1}.
  \end{multline*}
\end{proof}

\section{Splitting the transference theorem}\label{sec:splitting}

Proposition \ref{prop:omega_vs_Psi} allows reformulating Theorem \ref{t:lattice_transference} in terms of Schmidt--Summerer exponents. Remark \ref{r:lattice_transference} makes this reformulation very easy to perform.

\begin{theorem}[Reformulation of Theorem \ref{t:lattice_transference}] \label{t:lattice_transference_schmimmerered}
  Let $f(\pmb\tau)=|\pmb\tau|_+$. Then
  \begin{equation} \label{eq:lattice_transference_schmimmerered}
    \bPsi_1(\La,f)\leq\frac{\bPsi_1(\La^\ast,f)}{(d-1)^2}.
  \end{equation}
  (Notice that by statement \textup{(v)} of Proposition \ref{prop:inequalities_for_schmexponents} $\bPsi_1$ is never positive.)
\end{theorem}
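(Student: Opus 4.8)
The plan is to deduce Theorem \ref{t:lattice_transference_schmimmerered} directly from the original Theorem \ref{t:lattice_transference} (equivalently, from the reformulation in Remark \ref{r:lattice_transference}) by substituting the dictionary provided by Proposition \ref{prop:omega_vs_Psi}. Concretely, Proposition \ref{prop:omega_vs_Psi} gives, for $f(\pmb\tau)=|\pmb\tau|_+$,
\[
  1+\omega(\La)^{-1}=-\bPsi_1(\La,f)^{-1},
  \qquad
  1+\omega(\La^\ast)^{-1}=-\bPsi_1(\La^\ast,f)^{-1},
\]
where I should note in passing that $f^\ast=f$ here, since $|{-\pmb\tau}|_+=|\pmb\tau|_-$ is not literally $|\pmb\tau|_+$; so strictly one applies Proposition \ref{prop:omega_vs_Psi} to $\La^\ast$ with the function $f$, not $f^\ast$, and this is exactly what the statement of the theorem asks for. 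The remark's reformulation reads $1+\omega(\La)^{-1}\leq(d-1)^2\bigl(1+\omega(\La^\ast)^{-1}\bigr)$, so after the substitution this becomes
\[
  -\bPsi_1(\La,f)^{-1}\leq(d-1)^2\bigl(-\bPsi_1(\La^\ast,f)^{-1}\bigr),
\]
i.e. $\bPsi_1(\La,f)^{-1}\geq(d-1)^2\bPsi_1(\La^\ast,f)^{-1}$. Since both $\bPsi_1$-values are negative (as remarked, by statement \textup{(v)} of Proposition \ref{prop:inequalities_for_schmexponents}, or more directly by statement \textup{(ii)} there), dividing an inequality between negative reciprocals flips it back: multiplying through by the negative quantity $\bPsi_1(\La,f)\bPsi_1(\La^\ast,f)>0$ and rearranging yields $\bPsi_1(\La,f)\leq\bPsi_1(\La^\ast,f)/(d-1)^2$, which is \eqref{eq:lattice_transference_schmimmerered}.

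The one genuine subtlety is the degenerate cases, and handling them cleanly is the main thing to be careful about. If $\omega(\La^\ast)=\infty$, then $\bPsi_1(\La^\ast,f)^{-1}=-1$, i.e. $\bPsi_1(\La^\ast,f)=-1$; Theorem \ref{t:lattice_transference} then asserts $\omega(\La)\geq\frac{1}{d(d-2)}$. One checks that $\omega(\La)=\frac{1}{d(d-2)}$ corresponds to $\bPsi_1(\La,f)=-\bigl(1+d(d-2)\bigr)^{-1}=-(d-1)^{-2}$, and $\omega(\La)\geq\frac1{d(d-2)}$ translates (the map $\omega\mapsto-(1+\omega^{-1})^{-1}$ being increasing on $\omega>0$, decreasing toward $-1$ as $\omega\to\infty$... one must track monotonicity carefully here) into $\bPsi_1(\La,f)\leq-(d-1)^{-2}=\bPsi_1(\La^\ast,f)/(d-1)^2$, so \eqref{eq:lattice_transference_schmimmerered} holds with equality in the denominator factor. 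At the other extreme, if $\omega(\La^\ast)=0$ we should note that $\bPsi_1(\La^\ast,f)$ could in principle be $-\infty$; but in fact statement \textup{(ii)} of Proposition \ref{prop:inequalities_for_schmexponents} only gives $\bPsi_1\leq0$, while Proposition \ref{prop:properties_of_L_k}(ii) bounds $L_1(\pmb\tau)\geq-|\pmb\tau|_+ + O(1)$, hence $\psi_1\geq-1+o(1)$ and so $\bPsi_1\in[-1,0]$ always; thus no infinite value occurs on the right, and if $\bPsi_1(\La^\ast,f)=0$ the right-hand side of \eqref{eq:lattice_transference_schmimmerered} is $0$ and the inequality is immediate from $\bPsi_1(\La,f)\leq0$. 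Collecting these, the boundary behaviour is consistent and the translated inequality holds throughout the closed ranges.

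So the structure of the write-up is: (1) recall the equivalences from Proposition \ref{prop:omega_vs_Psi} for $\La$ and for $\La^\ast$ (both with the same $f=|\cdot|_+$), (2) recall Remark \ref{r:lattice_transference}, (3) substitute and perform the elementary reciprocal manipulation, being explicit about the sign of $\bPsi_1$ and the direction of the inequality when passing between a quantity and its reciprocal, and (4) verify the two endpoint cases $\omega(\La^\ast)=\infty$ and $\bPsi_1(\La^\ast,f)=0$ separately, noting that $\bPsi_1$ always lies in $[-1,0]$ so nothing else degenerates. I expect the only place where an error could creep in is the bookkeeping of inequality directions across the order-reversing reciprocal map on the negative reals, so the proof should spell that step out rather than leave it as "clearly".
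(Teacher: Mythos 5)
Your proposal is correct and takes essentially the paper's own route: the paper presents Theorem \ref{t:lattice_transference_schmimmerered} precisely as the substitution of the identity of Proposition \ref{prop:omega_vs_Psi} (applied to $\La$ and to $\La^\ast$, both with $f=|\cdot|_+$) into the inequality of Remark \ref{r:lattice_transference}, which is exactly your argument, with your sign bookkeeping ($\bPsi_1\in[-1,0]$) and endpoint checks ($\omega(\La^\ast)=\infty$, $\bPsi_1(\La^\ast,f)=0$) merely making explicit what the paper leaves implicit. Only tidy the two garbled asides: here $f^\ast\neq f$ (one simply applies Proposition \ref{prop:omega_vs_Psi} to $\La^\ast$ with $f$ itself), and the map $\omega\mapsto-(1+\omega^{-1})^{-1}$ is \emph{decreasing} on $\omega>0$, which is what your translation of $\omega(\La)\geq\frac{1}{d(d-2)}$ into $\bPsi_1(\La,f)\leq-(d-1)^{-2}$ actually uses.
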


It appears that due to Proposition \ref{prop:inequalities_for_schmexponents} Theorem \ref{t:lattice_transference_schmimmerered} can be split in the very same way Theorem \ref{t:essence_of_transference} is split by Theorem \ref{t:essence_of_transference_split_up}.

\begin{theorem}[Splitting Theorem \ref{t:lattice_transference_schmimmerered}] \label{t:lattice_transference_schmimmerered_split_up}
  Let $f(\pmb\tau)=|\pmb\tau|_+$. Then
  \[
    \bPsi_1(\La,f)\leq\ldots\leq
    \frac{\bPsi_k(\La,f)}{k}\leq\ldots\leq
    \frac{\bPsi_{d-1}(\La,f)}{d-1}\leq
    \dfrac{\bPsi_1(\La^\ast,f)}{(d-1)^2}.
  \]
\end{theorem}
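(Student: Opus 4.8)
The plan is to derive the chain from the already-established pieces, treating Theorem~\ref{t:essence_of_transference_split_up} as a template. First I would observe that the left part of the desired chain,
\[
  \bPsi_1(\La,f)\leq\ldots\leq\frac{\bPsi_k(\La,f)}{k}\leq\ldots\leq\frac{\bPsi_{d-1}(\La,f)}{d-1},
\]
is nothing but statement~\textup{(ii)} of Proposition~\ref{prop:inequalities_for_schmexponents} applied repeatedly: from $\tfrac{k+1}{k}\bPsi_k\leq\bPsi_{k+1}$ one gets $\bPsi_k/k\leq\bPsi_{k+1}/(k+1)$ for $k=1,\ldots,d-2$, so the monotonicity is free. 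The only genuinely new inequality to prove is the last link,
\[
  \frac{\bPsi_{d-1}(\La,f)}{d-1}\leq\frac{\bPsi_1(\La^\ast,f)}{(d-1)^2},
\]
which (since $f=f^\ast$ here, because $|\pmb\tau|_+$ and $|-\pmb\tau|_+=|\pmb\tau|_-$ are not equal — careful!) I must extract from the duality relations plus Proposition~\ref{prop:omega_vs_Psi} and Theorem~\ref{t:lattice_transference_schmimmerered}.

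The subtlety I want to flag is precisely the $f$ versus $f^\ast$ issue. In Proposition~\ref{prop:inequalities_for_schmexponents}\textup{(viii)} the duality reads $\bPsi_k(\La,f)=\bPsi_{d-k}(\La^\ast,f^\ast)$, where $f^\ast(\pmb\tau)=f(-\pmb\tau)$, i.e. for $f(\pmb\tau)=|\pmb\tau|_+$ we have $f^\ast(\pmb\tau)=|\pmb\tau|_-$, \emph{not} $|\pmb\tau|_+$. However, inequality~\eqref{eq:tau_minus_vs_tau_plus} says $|\pmb\tau|_+/(d-1)\leq|\pmb\tau|_-\leq(d-1)|\pmb\tau|_+$, which lets one compare $\bPsi_k(\La^\ast,f^\ast)$ with $\bPsi_k(\La^\ast,f)$ up to a factor of $d-1$ (in the appropriate direction, since $\bPsi_k\leq0$ and dividing a nonpositive quantity by a larger denominator makes it larger). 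So the route is: $\bPsi_{d-1}(\La,f)=\bPsi_1(\La^\ast,f^\ast)$ by \textup{(viii)}, then $\bPsi_1(\La^\ast,f^\ast)\leq(d-1)\bPsi_1(\La^\ast,f)$ — wait, this needs to go the other way; I would instead use $\bPsi_1(\La^\ast,f^\ast)\leq\bPsi_1(\La^\ast,f)/(d-1)$ using $|\pmb\tau|_-\geq|\pmb\tau|_+/(d-1)$ together with $L_1\leq0$. Combining, $\tfrac{\bPsi_{d-1}(\La,f)}{d-1}=\tfrac{\bPsi_1(\La^\ast,f^\ast)}{d-1}\leq\tfrac{\bPsi_1(\La^\ast,f)}{(d-1)^2}$, which is exactly the last link.

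The main obstacle, then, is getting the direction of all these inequalities straight — every quantity $\bPsi_k$ is $\leq 0$, so multiplying or dividing by a positive constant reverses intuition about ``larger'', and the comparison $|\pmb\tau|_-$ vs $|\pmb\tau|_+$ must be invoked with $L_1\le 0$ (equivalently $\psi_1\le 0$) so that the factor $(d-1)$ lands on the correct side. I would double-check this by instead routing through Proposition~\ref{prop:omega_vs_Psi}: since $\bPsi_1(\La,f)=-(1+\omega(\La)^{-1})^{-1}$ and $\bPsi_1(\La^\ast,f)=-(1+\omega(\La^\ast)^{-1})^{-1}$, inequality~\eqref{eq:lattice_transference_schmimmerered} is literally the Remark~\ref{r:lattice_transference} form of Theorem~\ref{t:lattice_transference}, which is already proved; so the last link $\tfrac{\bPsi_{d-1}(\La,f)}{d-1}\leq\tfrac{\bPsi_1(\La^\ast,f)}{(d-1)^2}$ follows once I show $\bPsi_{d-1}(\La,f)/(d-1)\leq\bPsi_1(\La,f)$, and that is exactly Proposition~\ref{prop:inequalities_for_schmexponents}\textup{(vi)}, namely $(d-1)\bPsi_1\leq\bPsi_{d-1}$. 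Thus the whole chain reduces to: \textup{(ii)} of Prop.~\ref{prop:inequalities_for_schmexponents} for the interior links, \textup{(vi)} of Prop.~\ref{prop:inequalities_for_schmexponents} for $\bPsi_{d-1}/(d-1)\leq\bPsi_1(\La,f)$, and Theorem~\ref{t:lattice_transference_schmimmerered} for $\bPsi_1(\La,f)\leq\bPsi_1(\La^\ast,f)/(d-1)^2$, assembled in that order. Writing it this way sidesteps the $f^\ast$ bookkeeping entirely.
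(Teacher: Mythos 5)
Your final assembly (the second route, which you prefer because it ``sidesteps the $f^\ast$ bookkeeping'') is broken, and the failure is exactly the direction issue you set out to guard against. You need $\bPsi_{d-1}(\La,f)/(d-1)\leq\bPsi_1(\La,f)$, and you cite statement (vi) of Proposition \ref{prop:inequalities_for_schmexponents}, namely $(d-1)\bPsi_1\leq\bPsi_{d-1}$, for it; but that inequality says $\bPsi_1\leq\bPsi_{d-1}/(d-1)$ --- the reverse. In fact $\bPsi_{d-1}(\La,f)/(d-1)\leq\bPsi_1(\La,f)$ is false in general: the interior links you have already established give $\bPsi_1\leq\bPsi_{d-1}/(d-1)$, so your claimed inequality would force equality all along the chain. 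There is also a structural reason this route cannot work: Theorem \ref{t:lattice_transference_schmimmerered} is an upper bound on $\bPsi_1(\La,f)$, which sits at the \emph{bottom} of the chain, and an upper bound on the smallest term gives no upper bound on the larger term $\bPsi_{d-1}(\La,f)/(d-1)$. The splitting theorem strictly refines Theorem \ref{t:lattice_transference_schmimmerered} (composing the chain recovers it), so the last link cannot be deduced from it.

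Your first sketch, by contrast, is essentially the paper's proof and is the one to keep: the interior links come from statement (v) of Proposition \ref{prop:inequalities_for_schmexponents} (you label it (ii), but the content you quote, $\tfrac{k+1}{k}\bPsi_k\leq\bPsi_{k+1}$, is (v)); then $\bPsi_{d-1}(\La,f)=\bPsi_1(\La^\ast,f^\ast)$ by (viii); then $\bPsi_1(\La^\ast,f^\ast)\leq\bPsi_1(\La^\ast,f)/(d-1)$. For this last comparison the relevant half of \eqref{eq:tau_minus_vs_tau_plus} is $f^\ast(\pmb\tau)=|\pmb\tau|_-\leq(d-1)|\pmb\tau|_+=(d-1)f(\pmb\tau)$, not $|\pmb\tau|_-\geq|\pmb\tau|_+/(d-1)$ as you wrote: since $L_1\leq0$, the bound $f^\ast\leq(d-1)f$ gives $L_1/f^\ast\leq L_1\big/\big((d-1)f\big)$ pointwise, and taking $\liminf$ yields the claim. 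With that correction, and dividing $\bPsi_{d-1}(\La,f)=\bPsi_1(\La^\ast,f^\ast)$ by $d-1$, you get exactly the last link, and the argument coincides with the paper's.
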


\begin{proof}
  It suffices to apply statements \textup{(v)}, \textup{(viii)} of Proposition \ref{prop:inequalities_for_schmexponents}, and notice that by \eqref{eq:tau_minus_vs_tau_plus} we have
  \[
    \frac{f(\pmb\tau)}{d-1}\leq f^\ast(\pmb\tau)\leq(d-1)f(\pmb\tau),
  \]
  whence it follows that
  \[
    (d-1)\bPsi_1(\La^\ast,f)\leq
    \bPsi_1(\La^\ast,f^\ast)\leq
    \frac{\bPsi_1(\La^\ast,f)}{d-1}.
  \]
\end{proof}

It can be easily seen that the argument used in the proof of Theorem \ref{t:lattice_transference_schmimmerered_split_up} actually provides a stronger version of Theorem \ref{t:lattice_transference_schmimmerered}. If we just omit the last step, and do not substitute $f^\ast$ with $f$, we get the following statement.

\begin{theorem} \label{t:lattice_transference_for_f_congugate}
  For every $f$ we have
  \begin{equation} \label{eq:lattice_transference_for_f_congugate}
    \bPsi_1(\La,f)\leq\frac{\bPsi_1(\La^\ast,f^\ast)}{d-1}.
  \end{equation}
\end{theorem}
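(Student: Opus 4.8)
The plan is to chase the inequality through the same pipeline that produced Theorem \ref{t:lattice_transference_schmimmerered_split_up}, but to stop before the final change of functional from $f^\ast$ back to $f$. Concretely, I would first invoke statement \textup{(viii)} of Proposition \ref{prop:inequalities_for_schmexponents}, which gives, for every admissible $f$ and every $k$,
\[
  \bPsi_k(\La,f)=\bPsi_{d-k}(\La^\ast,f^\ast).
\]
In particular $\bPsi_{d-1}(\La,f)=\bPsi_1(\La^\ast,f^\ast)$. So it remains to bound $\bPsi_1(\La,f)$ from above in terms of $\bPsi_{d-1}(\La,f)/(d-1)$.

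That bound is exactly the chain in statement \textup{(v)} of Proposition \ref{prop:inequalities_for_schmexponents}, read all the way up: iterating $\tfrac{k+1}{k}\bPsi_k\leq\bPsi_{k+1}$ for $k=1,\dots,d-2$ yields
\[
  \bPsi_1(\La,f)\leq\frac{\bPsi_k(\La,f)}{k}\leq\frac{\bPsi_{d-1}(\La,f)}{d-1},
\]
which is precisely statement \textup{(ii)} of Theorem \ref{t:essence_of_transference_split_up} transplanted to the $\psi$-setting (it holds verbatim because dividing the inequalities of Proposition \ref{prop:properties_of_S_k}\,\textup{(ii)} by $f(\pmb\tau)>0$ preserves them, and $\liminf$ is monotone). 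Combining the two displays,
\[
  \bPsi_1(\La,f)\leq\frac{\bPsi_{d-1}(\La,f)}{d-1}=\frac{\bPsi_1(\La^\ast,f^\ast)}{d-1},
\]
which is \eqref{eq:lattice_transference_for_f_congugate}.

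There is essentially no obstacle here: the statement is a pure corollary, and the proof is two lines once one has Proposition \ref{prop:inequalities_for_schmexponents}. The only thing worth a word of care is that the chain \textup{(v)} is a consequence of Minkowski's second theorem (via Proposition \ref{prop:properties_of_S_k}\,\textup{(i)}--\textup{(ii)}), so the $O(1)$ terms that appear at the finite level of $S_k$ wash out after division by $f(\pmb\tau)\to\infty$; this is already accounted for in the way Proposition \ref{prop:inequalities_for_schmexponents} is stated, so nothing new needs to be checked. One should also note that, unlike in Theorem \ref{t:lattice_transference_schmimmerered_split_up}, no use is made of \eqref{eq:tau_minus_vs_tau_plus}: that inequality is only needed to pass from $f^\ast$ to $f$ and thereby recover the factor $(d-1)^2$, whereas here we keep $f^\ast$ and pay only a single factor $d-1$. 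Writing it out is then a matter of citing statements \textup{(v)} and \textup{(viii)} of Proposition \ref{prop:inequalities_for_schmexponents} in sequence.
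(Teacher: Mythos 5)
Your proof is correct and is essentially identical to the paper's: the paper obtains this theorem by running the proof of Theorem \ref{t:lattice_transference_schmimmerered_split_up} (statements \textup{(v)} and \textup{(viii)} of Proposition \ref{prop:inequalities_for_schmexponents}) and simply omitting the final substitution of $f^\ast$ by $f$ via \eqref{eq:tau_minus_vs_tau_plus}, exactly as you describe. Nothing further is needed.
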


\begin{remark}
 The appearance of Theorem \ref{t:lattice_transference_for_f_congugate} becomes nicest when $f$ is symmetric, for instance, when $f(\pmb\tau)=|\pmb\tau|$. However, the corresponding reformulation in terms of Diophantine exponents of lattices requires another definition of those exponents, which might seem less natural.
\end{remark}

\section{Three-dimensional case and minimal systems of lattice points}\label{sec:3dim}

Let us analyse the relations gathered up in Proposition \ref{prop:inequalities_for_schmexponents} in the simplest nontrivial case $d=3$.

There are two major cases for $d=3$, depending on whether $\La$ contains nonzero points of each coordinate axis, or not. In other words, whether $\La$ is the image of a sublattice of $\Z^3$ under the action of a diagonal operator, or not. If it is, the exponents of $\La$ are equal to those of $\Z^3$, whereas the exponents of $\Z^3$ are easily calculated. But if it is not, there are infinitely many \emph{Minkowski bases} of $\La$. 
%
%
%
%
%
%
A basis of $\La$ consisting of vectors $\vec v_j=(v_{j1},v_{j2},v_{j3})$, $j=1,2,3$, is a \emph{Minkowski basis}, if there is no nonzero $\vec v=(v_1,v_2,v_3)\in\La$ such that for each $i=1,2,3$ we have
\[|v_i|<\max(|v_{1i}|,|v_{2i}|,|v_{3i}|).\]
In this case we obviously have $\apsi_1=\aPsi_1=0$, so, by Proposition \ref{prop:inequalities_for_schmexponents}
\[
\begin{aligned}
  & \apsi_1=\bpsi_3=\aPsi_1=\aPsi_2=\aPsi_3=\bPsi_3=0, \\
  & \bpsi_1=\bPsi_1,\hskip3.9mm
    \apsi_3=-\bPsi_2, \\
  & \bpsi_1(\La,f)=-\apsi_3(\La^\ast,f^\ast),\hskip3.9mm
    \apsi_3(\La,f)=-\bpsi_1(\La^\ast,f^\ast), \\
  & \bpsi_2(\La,f)=-\apsi_2(\La^\ast,f^\ast),\hskip3.9mm
    \apsi_2(\La,f)=-\bpsi_2(\La^\ast,f^\ast),
\end{aligned}
\]
Hence $\bpsi_1,\bpsi_2,\apsi_2,\apsi_3$ determine the remaining exponents. 
They also satisfy
\begin{equation} \label{eq:bpsi_1_2_apsi_2_3}
  c_1(f)\leq\bpsi_1\leq\bpsi_2\leq0\leq\apsi_2\leq\apsi_3\leq c_2(f)
\end{equation}
for some constants $c_1(f),c_2(f)$ (see Proposition \ref{prop:properties_of_L_k}). For instance,
\[c_1(|\cdot|_+)=-1,\quad c_2(|\cdot|_+)=d-1.\]

It is natural to ask if the set of all possible values of the quadruple
\[(\bpsi_1,\bpsi_2,\apsi_2,\apsi_3)\]
is determined by \eqref{eq:bpsi_1_2_apsi_2_3}, or if there are other restrictions on those exponents.

It is also interesting whether in the case $d\geq4$ we have $\apsi_1(\La)=0$ for each lattice that is ``irrational'' enough. For instance, if there are no nonzero lattice points in the coordinate planes.

If $\apsi_1(\La)>0$, then all \emph{minimal systems} of lattice points, except maybe a finite number of them, have rank less than $d$. A system of $k$ lattice points $\vec v_j=(v_{j1},\ldots,v_{jd})$, $j=1,\ldots,k$, $k\leq d$, is called \emph{minimal}, if, same as for Minkowski bases, there is no nonzero $\vec v=(v_1,\ldots,v_d)\in\La$ such that for each $i=1,\ldots,d$ we have
\[|v_i|<\max_{1\leq j\leq n}|v_{ji}|.\]
The \emph{rank} of this system is the dimension of the minimal subspace containing it. Clearly, if the rank of $\vec v_1,\ldots,\vec v_d$ is $d$, they form a basis of a full rank sublattice of $\La$. If such a basis is a minimal system, the volume of the corresponding parallelepiped is bounded away from zero. And if there are infinitely many such systems, we obviously have $\apsi_1(\La)=0$.

On the other hand, if $\apsi_1(\La)=0$, then all the exponents $\aPsi_1,\ldots,\aPsi_d$ degenerate into zero, as in this case we have
\[0=\apsi(\La)=\aPsi_1(\La)\leq\ldots\leq\aPsi_d(\La)=0.\]
From this point of view the question whether there exists a lattice such that all its minimal systems have rank less than $d$ is of obvious importance.

%


\paragraph{Acknowledgements.}

The author is a Young Russian Mathematics award winner and would like to thank its sponsors and jury.

%
%
%
%
%

\end{document}